\newcommand{\knorm}[2]{\vert\vert #1 \vert\vert_{#2}}
\newcommand{\Cset}{\ensuremath{\mathbb{C}}}
\newcommand{\Oh}[1]{\ensuremath{O(#1)}}
\newcommand{\matlab}{{\rm Matlab}}
\newcommand{\ilut}{{\rm ILUT}}
\newcommand{\ainv}{{\rm AINV}}  
\newcommand{\bicgstab}{{\rm BiCGSTAB}}
\newcommand{\absnorm}[1]{\vert #1 \vert}
\newcommand{\nz}[1]{\ensuremath{\mbox{\rm nz}(#1)}}
\newcommand{\Wcal}{\ensuremath{\mathcal{W}}}
\newcommand{\Vcal}{\ensuremath{\mathcal{V}}}
\newcommand{\mcSF}{{\tt MC64}}
\newcommand{\paif}{{\rm PAIF}} 
\newcommand{\diaf}{{\rm DIAF}}
\def\setC{\,\mathbb{C}}
\newcounter{example}
 \renewcommand{\theexample}{\thesection.\arabic{example}}
 \newenvironment{example}
 {\refstepcounter{example}\paragraph {\it Example \theexample}}
 {\vspace{1ex}}
\title{PRECONDITIONING WITH
DIRECT APPROXIMATE FACTORING OF THE INVERSE}
\author{
Mikko Byckling\thanks{
CSC - IT Center for Science,
P.O. Box 405,
02101 Espoo, 
Finland
({\tt Mikko.Byckling@csc.fi}).}
\and 
Marko Huhtanen\thanks{
Mathematics Division, Department of Electrical and Information Engineering,
University of Oulu, 
P.O. Box 4500,
FIN-90401 Oulu, 
Finland,
({\tt Marko.Huhtanen@oulu.fi}). }
}
\begin{document}
\maketitle
\begin{abstract}
To precondition a large and sparse
linear system, two direct methods for  
approximate factoring of the inverse are devised. 
The algorithms are fully parallelizable and appear to be 
more robust than the iterative methods suggested for the task.
A method to compute one of 
the matrix subspaces optimally is derived.
Possessing a considerable amount of flexibility, 
these approaches extend the approximate inverse preconditioning
techniques in several natural ways. 
Numerical experiments are given to illustrate the performance
of the preconditioners on a number of challenging  
benchmark 
linear systems.
\end{abstract}
\begin{keywords} preconditioning, 
approximate factoring, parallelizable, sparsity
pattern, approximate inverse

\end{keywords}

\begin{AMS} 
65F05, 65F10
\end{AMS}

\pagestyle{myheadings}
\thispagestyle{plain}

\markboth{M. BYCKLING  AND M. HUHTANEN 
}{DIRECT APPROXIMATE FACTORING} 

\section{Introduction}  
Approximate factoring of the inverse means parallelizable 
algebraic techniques for preconditioning 
a linear system involving
a large and sparse nonsingular matrix $A\in \setC^{n \times n}$.
The  idea is to multiply 
$A$ by a matrix $W$ from the right (or left) with the aim at having a matrix $AW$ which
can be approximated with an easily invertible matrix.\footnote{Direct methods
are typically devised in this way, i.e., both the  LU and QR factorization
can be interpreted such that  the purpose is to multiply
$A$ with a matrix 
from the left so as to have an upper triangular, i.e., an easily invertible 
matrix.}
As opposed to  the usual paradigm of preconditioning, iterations 
are not expected to converge rapidly for $AW.$  
Instead, the task can be interpreted as that of solving
\begin{equation}\label{aivbh}
\inf_{W\in \mathcal{W},\, V\in \mathcal{V}} \left|\left| AWV^{-1}-I \right|\right|_F
\end{equation}
approximately 
by linearizing the problem appropriately \cite{HR,BHU}.
Here $\mathcal{W}$ and $\mathcal{V}$ 
are  
nonsingular sparse standard 
matrix subspaces of $\setC^{n \times n}$ with the property that
that the  nonsingular elements of $\mathcal{V}$ 
are assumed to allow
a rapid application of the inverse.
Approximate solutions to this problem can be generated with the power method
as suggested in \cite{BHU}.
In this paper, direct methods are devised for approximate
factoring based on solving
\begin{equation}\label{aivan}
\min_{W\in \mathcal{W},\; V\in \mathcal{V}} 
\left|\left| AW-V \right|\right|_F
\end{equation}
when the columns of either $W$ or $V$ being constrained to 
be of fixed norm.
These two approaches allow, once  the matrix subspace
$\mathcal{W}$ has been fixed, 
choosing 
the matrix subspace $\mathcal{V}$ in an optimal way.

The first algorithm solves
\eqref{aivan} when 
the columns of $V$ are constrained to be of fixed norm. 
Then the matrix subspaces $A\mathcal{W}$ and $\mathcal{V}$
are compared as such while other properties of $A$ are
largely overlooked.
The second algorithm solves the problem when 
the columns of $W$ are constrained to be of fixed norm,  
allowing taking properties of $A$  more into account.  
In \cite{BHU} the approach to this end was 
based on approximating the smallest
singular value of the map 
\begin{equation}\label{singva}
W\longmapsto (I-P_{\mathcal{V}})AW
\end{equation}
from $\mathcal{W}$ to $\setC^{n \times n}$ with the power iteration.
Here $P_{\mathcal{V}}$ denotes the orthogonal projector on $\setC^ {n \times n}$
onto $\mathcal{V}$. 
The second algorithm devised in this paper is a direct method 
for solving the task.

The algorithms proposed extend 
the standard approximate inverse computational techniques
in several ways.
(For sparse approximate inverse computations, see \cite[Section 5]{BE},  
\cite{GH} and \cite[Chapter 10.5]{SA} 
and references therein.) 
Aside from possessing an abundance of degrees of freedom, we have 
an increased amount of
optimality if we  suppose the matrix subspace  $\mathcal{W}$ to be given.
Then computable conditions can be formulated for optimally choosing 
the matrix subspace $\mathcal{V}$. This is achieved without any significant
increase in the computational cost. 
In particular, only a columnwise access 
to the entries of $A$ is required.\footnote{Accessing 
the entries of the adjoint can be costly
in parallel computations.}

We aim at maximal parallelizability by solving 
the minimization problem \eqref{aivan}
columnwise. The cost of such a high parallelism is the need to have
a mechanism to  somehow control the conditioning
of the factors. After all, parallelism means 
performing  computations locally and independently.
Also this can be achieved without any significant
increase in the computational cost.

Although the choice of the matrix subspace $\mathcal{W}$ is 
apparently less straightforward, some ideas are suggested to this end. Here 
we cannot claim achieving optimality, except that once done,
thereafter $\mathcal{V}$ can be generated in an optimal way. In particular,
because there are so many alternatives to generate matrix
subspaces, many ideas outlined in this paper are certainly not
fully developed and need to be investigated more
thoroughly.

The paper is organized as follows. In Section \ref{method} two algorithms
are devised for approximate factoring of the inverse. 
Section \ref{sec3} is concerned with ways to choose the matrix
subspace $\mathcal{V}$ optimally. Related stabilization schemes are suggested.
In Section \ref{Sec4} heuristic Al schemes are suggested for
constructing the matrix subspace $\mathcal{W}$.
In Section 5 numerical experiments are conducted.
The toughest benchmark problems from \cite{Benzi2000}
are used in the tests.

\section{Direct approximate factoring of the inverse}\label{method}  
\label{sec:DIAF}

In what follows, two algorithms are devised for computing matrices $W$ and $V$ to have 
an approximate factorization 
\begin{equation}\label{afac}
A^{-1}\approx WV^{-1}
\end{equation}
 of the inverse of a given sparse
nonsingular  matrix $A\in \setC^{n \times n}$. 
The factors $W$ and $V$
are assumed to belong to given sparse standard matrix subspaces $\mathcal{W}$ 
and $\mathcal{V}$ of 
$\setC^{n \times n}$. A matrix subspace
is said to be standard 
if it has a basis consisting of standard
basis matrices.\footnote{Analogously to the standard basis vectors of $\setC^n$,
a standard basis matrix of $\setC^{n \times n}$ has exactly one
entry equaling one while its other entries are zeros.}
This allows maximal parallelizability by the fact that then
the arising computational problems can be 
solved columnwise independently. 
Of course,
parallelizability is imperative to fully exploit the processing power 
of modern computing architectures. 

\subsection{First basic algorithm}
\label{sec:qralg}
Consider the minimization problem \eqref{aivan} 
under the assumption that 
the columns of $V$ are constrained to be unit vectors,
i.e., of norm one. 
Based on the sparsity structure of $\mathcal{W}$ and 
the corresponding columns of $A$,
the aim is at first choosing $V$ optimally. Thereafter $W$ is
determined optimally. 

To describe the method, denote by $w_j$ and $v_j$ the $j$th
columns of $W$ and $V$. The column $v_j$ is computed first as follows.
Assume there can appear $k_j\ll n$ nonzero
entries in $w_j$ at prescribed positions
and denote by $A_j\in \setC^{n\times k_j}$ 
the matrix with the corresponding columns of $A$ extracted. Compute 
the sparse QR factorization
\begin{equation}\label{spaqr}
A_j=Q_jR_j
\end{equation}
of $A_j$. (Recall that the sparse QR-factorization is also needed in sparse approximate
inverse computations.)
Assume there can appear $l_j\ll n$ nonzero
entries in $v_j$ at prescribed positions
and denote by $M_j\in \setC^{k_j\times l_j}$ 
the matrix with the corresponding columns of $Q_j^*$ extracted.
Then $v_j$, regarded as a vector in $\setC^{l_j}$, 
of unit norm is computed satisfying
\begin{equation}\label{nocond}
\left|\left|M_jv_j\right|\right| =\left|\left|M_j\right|\right|,
\end{equation}
i.e., $v_j$ is chosen in such a way that 
its component in  the column space of $A_j$ is as large as possible. 
This can be found by computing
the singular value decomposition 
of $M_j$. (Its computational 
cost is completely marginal by the fact that $M_j$ is only
a $k_j$-by-$l_j$ matrix.)

Suppose the column $v_j$ 
has been computed as just described for $j=1,\ldots, n$. 
Then solve the least squares problems
\begin{equation}\label{condwj}
\min_{w_j \in \setC^{k_j}}
\left|\left|A_jw_j-v_j\right|\right|_2
\end{equation}
to have the column $w_j$ of $W$.

For each pair $v_j$ and $w_j$ of columns, the computational cost consists of 
computing the sparse QR factorization \eqref{spaqr} 
and, by using it, solving \eqref{nocond} and \eqref{condwj}.
For the sparse QR factorization there are codes available \cite{DAT}.
(Now $A_j$ has the special property of
being very ``tall and skinny''.)

The constraint of requiring 
the columns of $V$ to be unit vectors is 
actually not a genuine constraint.
That is, the method is scaling invariant from the right and thereby 
any nonzero constraints are acceptable in the sense that the condition
\eqref{nocond} could equally well be replaced with 
$\left|\left|M_jv_j\right|\right| =r_j\left|\left|M_j\right|\right|.$
Let us formulate this as follows.

\begin{theorem} Assume $A\in \setC^{n\times n}$
is nonsingular. If $\mathcal{V}$ and 
$\mathcal{W}$ are standard matrix subspaces of $\setC^{n \times n}$,
then the  factorization \eqref{afac} computed as described is independent 
of the fixed column constraints
$\left|\left|v_j\right|\right|_2=r_j >0$
for $j=1,\ldots, n$.
\end{theorem}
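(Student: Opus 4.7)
My plan is to show that replacing the unit-norm constraint $\|v_j\|_2 = 1$ in \eqref{nocond} by $\|v_j\|_2 = r_j$ simply rescales both $v_j$ and $w_j$ by the common factor $r_j$, so that these rescalings cancel in the product $WV^{-1}$.

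First I would verify that the two local subproblems behave linearly under rescaling. The selection of $v_j$ via \eqref{nocond} is scale-homogeneous: the feasible set under the modified constraint is $r_j$ times the unit sphere in $\setC^{l_j}$, and the objective $\|M_j v\|_2$ is positively homogeneous of degree one, so a maximizer under $\|v\|_2 = r_j$ is exactly $r_j$ times a leading right singular vector of $M_j$. Because $\mathcal{V}$ is a \emph{standard} matrix subspace, scalar multiplication preserves the prescribed sparsity pattern, hence membership in $\mathcal{V}$. Similarly, for fixed $A_j$ the least-squares problem \eqref{condwj} depends linearly on its right-hand side, with (minimum-norm) solution $A_j^{+} v_j$; rescaling $v_j$ by $r_j$ therefore rescales $w_j$ by the same factor.

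Assembling the columns, let $V^{(1)}, W^{(1)}$ denote the factors produced under the unit-norm constraint and $V^{(r)}, W^{(r)}$ those produced under the constraints $r_j$. With $D=\mathrm{diag}(r_1,\ldots,r_n)$ we obtain $V^{(r)} = V^{(1)} D$ and $W^{(r)} = W^{(1)} D$. Since $D$ is invertible (all $r_j > 0$), $V^{(r)}$ is invertible if and only if $V^{(1)}$ is, and in that case
\[
W^{(r)} \bigl(V^{(r)}\bigr)^{-1} = W^{(1)} D D^{-1} \bigl(V^{(1)}\bigr)^{-1} = W^{(1)} \bigl(V^{(1)}\bigr)^{-1},
\]
which yields the asserted independence. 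I do not expect a substantive obstacle: the only nontrivial point is the homogeneity of \eqref{nocond}, which follows immediately from the SVD characterization of its solution; the rest is bookkeeping that exploits the columnwise, independent nature of the algorithm.
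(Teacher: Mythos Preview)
Your argument is correct and follows essentially the same route as the paper's proof: both show that changing the column constraints from $\|v_j\|_2=1$ to $\|v_j\|_2=r_j$ amounts to right-multiplying $V$ and $W$ by the same invertible diagonal matrix, so that $WV^{-1}$ is unchanged. You simply make explicit the homogeneity of \eqref{nocond} and the linearity of \eqref{condwj} that the paper leaves implicit.
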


\begin{proof} Let $W$ and $V$ be the matrices
computed with the unit norm constraint for the columns of $V$. Let 
$\hat{W}$ and $\hat{V}$ be computed 
with other strict positivity constraints for the columns of $\hat{V}$,
i.e., \eqref{nocond} is replaced with the condition
\begin{equation}\label{repla}
\left|\left|M_jv_j\right|\right| =r_j\left|\left|M_j\right|\right|.
\end{equation}
Then we have $V=\hat{V}D$ and $W=\hat{W}D$ 
for a diagonal matrix $D$ with nonzero entries.
Consequently, $WV^{-1}=\hat{W}\hat{V}^{-1}$ whenever the factors 
are invertible. 
\end{proof}

\begin{corollary}\label{opto} If a matrix $V$ solving  
%
%
\begin{equation}\label{alkuper}
\min_{W\in \mathcal{W},\; V\in \mathcal{V},\, ||V||_F=1} 
\left|\left| AW-V \right|\right|_F.
\end{equation}
is nonsingular, 
then the factorization \eqref{afac}
coincides with the one  computed to satisfy 
\eqref{nocond} and
\eqref{condwj}.
\end{corollary}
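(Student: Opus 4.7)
The plan is to reduce the constrained minimization in \eqref{alkuper} to the columnwise stationarity conditions satisfied by the algorithm and then appeal to the scaling invariance in Theorem~2.1.

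First I would exploit the fact that $\mathcal{W}$ and $\mathcal{V}$ are standard, which lets the columns $w_j$ and $v_j$ vary independently over their prescribed supports. This makes the objective and the constraint split columnwise,
\[
\left\|AW - V\right\|_F^2 = \sum_{j=1}^n \left\|A_j w_j - v_j\right\|_2^2, \qquad \left\|V\right\|_F^2 = \sum_{j=1}^n \left\|v_j\right\|_2^2.
\]
For each fixed $v_j$, the inner minimization over $w_j$ is precisely the least squares problem \eqref{condwj}, with optimal residual $\|v_j\|_2^2 - \|Q_j^* v_j\|_2^2 = \|v_j\|_2^2 - \|M_j v_j\|_2^2$ once $v_j$ is identified with its compressed representative in $\setC^{l_j}$. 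The problem \eqref{alkuper} therefore reduces to maximizing $\sum_j \|M_j v_j\|_2^2$ subject to $\sum_j \|v_j\|_2^2 = 1$.

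Next I would analyze this reduced problem by Lagrange multipliers. Stationarity forces $M_j^* M_j v_j = \lambda v_j$ with a common multiplier $\lambda$, while the optimal value itself equals $\lambda \sum_j \|v_j\|_2^2 = \lambda$. The nonsingularity of $V$ rules out the degenerate maximizers with some $v_j = 0$, so every $v_j$ is a nonzero eigenvector, and combining this with the obvious upper bound $\lambda \leq \max_j \|M_j\|_2^2$ pins down $\sqrt{\lambda} = \|M_j\|_2$ for every $j$. Consequently each $v_j$ is a top right singular vector of $M_j$ and satisfies the scaled version \eqref{repla} of \eqref{nocond} with $r_j = \|v_j\|_2 > 0$. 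The corresponding $w_j$'s then solve \eqref{condwj} by construction.

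Having matched both $v_j$ and $w_j$ to the algorithm's output up to the positive column scalings $r_j$, I would close by invoking Theorem~2.1, which states that replacing the unit-norm constraints in the algorithm by any positive column norms does not change $WV^{-1}$. The main obstacle I anticipate is the nonsingularity step: the unconstrained maximizer of the reduced problem generically concentrates all mass on a single index $j^\ast = \arg\max_j \|M_j\|_2$, producing a rank-one $V$. Only the assumption that $V$ is nonsingular forces every $r_j$ to be positive, which in turn forces the $\|M_j\|_2$ to coincide and yields the required alignment of each $v_j$ with a top singular direction of $M_j$.
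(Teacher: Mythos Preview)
Your argument is correct, but it is not the paper's. The paper gives a two-line shortcut: take a global minimizer $(W,V)$ of \eqref{alkuper} with $V$ nonsingular, freeze the column norms $r_j=\|v_j\|_2>0$, and observe that $(W,V)$ is then also optimal for the problem with these fixed column norms (any improvement there would still satisfy $\|V\|_F=1$ and beat the global minimizer). That fixed-column-norm problem decouples into precisely the algorithm's columnwise problems \eqref{repla} and \eqref{condwj}, so the two outputs coincide and Theorem~2.1 finishes. No Lagrange multipliers are used, and nothing is said about the $\|M_j\|$.

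Your Lagrangian route is longer but extracts strictly more information: from $M_j^*M_j v_j=\lambda v_j$ with a common $\lambda$ and every $v_j\neq 0$ you conclude that all $\|M_j\|$ must coincide, which is a genuine structural necessary condition for the corollary's hypothesis to hold. One small wording fix: the step that pins down $\lambda$ is not the bound $\lambda\le\max_j\|M_j\|^2$ by itself, but rather the pair of facts that (i) $\lambda$ is an eigenvalue of each $M_j^*M_j$, so $\lambda\le\|M_j\|^2$ for every $j$, and (ii) the optimal value $\lambda$ is at least the value $\max_j\|M_j\|^2$ attained by the rank-one competitor you invoke in your last paragraph. With those two pieces in place your conclusion $\lambda=\|M_j\|^2$ for all $j$ is immediate. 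The extra structural byproduct is nice, but since it is not needed for the corollary as stated, the paper's freeze-the-norms reduction is the more economical proof.
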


\begin{proof} Suppose $W$ and $V$ solve \eqref{alkuper}.
Since $V$ is 
invertible, we have $\left|\left|v_j\right|\right|_2=r_j>0$.
Using these constraints, compute 
$\hat{W}$ and $\hat{V}$  
to satisfy 
\eqref{repla} and
\eqref{condwj}.
This means solving \eqref{alkuper} columnwise
and thereby the corresponding factorizations coincide.
\end{proof}

It is instructive to see how the computation of an approximate
inverse relates with this. (For sparse approximate inverses
and their historical development, see \cite[Section 5]{BE}.)

\smallskip 

\begin{example} In approximate inverse computations, the
matrix subspace $\mathcal{V}$ is as simple as possible, i.e.,
 the set of diagonal matrices. 
Regarding the constraints, 
the columns are constrained to be unit vectors. Therefore one 
can replace $\mathcal{V}$ with the identity matrix, as is customary. 
See also Example 
\ref{exai} below.
\end{example}

\subsection{Second basic algorithm}
\label{sec:svdalg}
Consider the minimization problem \eqref{aivan} 
under the assumption that 
the columns of $W$ are constrained to be unit vectors instead. 
Based on the sparsity structure of $\mathcal{W}$ and 
the corresponding columns of $A$,
the aim now is at first choosing $W$ optimally. Thereafter $V$ is
determined optimally. The resulting scheme yields a direct analogue of
the power method suggested  in \cite{BHU}. However, 
the method proposed here has
at least three advantages. First, being direct, it seems to be more robust since
there is no need to tune parameters used in the power method.
Second, the Hermitian transpose of $A$ is not needed. Third,
the computational cost is readily predictable by the fact
that, in essence, we only need to compute sparse QR factorizations.

To describe the method, denote by $w_j$ and $v_j$ the $j$th
columns of $W$ and $V$. The column $w_j$ is computed first as follows.
Assume there can appear $k_j\ll n$ nonzero
entries in $w_j$ at prescribed positions
and denote by $A_j\in \setC^{n\times k_j}$ 
the matrix with the corresponding columns of $A$ extracted. 
Assume there can appear $l_j\ll n$ nonzero
nonzero entries in $v_j$ at prescribed positions  and denote by
$\hat{A}_j\in \setC^{(n-l_j)\times k_j}$
the matrix with the corresponding rows of $A_j$ 
removed. Then take $w_j$ to be a right singular vector corresponding
to the smallest singular value of $\hat{A}_j$. 

To have $w_j$ inexpensively, compute 
the sparse QR factorization
$$\hat{A}_j=Q_jR_j$$
of $\hat{A}_j$. Then compute the singular value decomposition of $R_j$.
Of course, its computational cost is completely negligible.
(However, do not form the arising product to have the 
SVD of $\hat{A}_j$ explicitly.) Then take $w_j$ from the singular value
decomposition of $R_j$. 

Suppose the column $w_j$ has been computed as just described for $j=1,\ldots, n$. Then,
to have the columns of $V$, set
$$V=P_{\mathcal{V}}A[w_1\cdots w_n],$$ 
i.e., nonzero entries are accepted only in the
allowed sparsity structure of $v_j$.

For an analogue of Corollary \ref{opto}, 
assume a matrix corresponding to  the smallest
singular value of the linear map \eqref{singva} is nonsingular.
Since $\mathcal{W}$ is a standard matrix
subspace, the computations can be performed columnwise.
The resulting $W$ can be chosen to coincide, once divided
by $\sqrt{n}$, with this matrix.

\subsection{Some general remarks}\label{remarks}
In approximate inverse preconditioning, it is well-known that it
can make a difference whether one computes a right or left approximate
inverse \cite[pp. 449--450]{BE}. 
As we have generalized this technique, this is the case with the
approximate factoring of the inverse also. Here we have considered
only preconditioning from the right.

The usage of standard matrix subspaces leads to maximal parallelizability.
In view of approximating the inverse,
this means that computations are done locally (columnwise) 
and independently, i.e., 
without any global control. 
To compensate for this, with an eye to improve the conditioning of the factors, it seems advisable to impose
additional constraints. This is considered in Section \ref{sec3}.

The simultaneous (somehow optimal) choice of the matrix 
subspaces $\mathcal{W}$ and $\mathcal{V}$ is a delicate matter.
In \cite{BHU} we gave a rule thumb according to which the  
sparsity structures of the matrix
subspaces 
should differ 
as much as possible in approximate factoring of the inverse. 
(This automatically holds in computing  approximate inverses and 
ILU factorizations.)
Numerical 
experiments seem to support this. 
Although we do not quite understand the reasons for this, 
it is partially related with the fact that then there are very few redundancies 
in the factorizations \eqref{afac} as follows.

\begin{proposition} Let $\mathcal{V}$ and $\mathcal{W}$ be standard nonsingular
matrix subspaces of $\setC^{n \times n}$ containing the identity.
If in the complement of the diagonal matrices the intersection of 
$\mathcal{V}$ and $\mathcal{W}$ is empty, then
the maximum rank of the map
\begin{equation}\label{tulo}
(V,W)\longmapsto WV^{-1}
\end{equation}
on $\mathcal{V}\times \mathcal{W}\cap {\rm GL}(n,\setC)$
is $\dim \mathcal{V}+\dim \mathcal{W}-n.$ 
\end{proposition}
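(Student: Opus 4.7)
The plan is to establish matching upper and lower bounds for the rank of the map \eqref{tulo}, interpreting the rank as the dimension of the image (equivalently, the generic rank of the differential).

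For the upper bound I would exhibit an $n$-dimensional symmetry whose orbits lie inside the fibers. Because $\mathcal{V}$ and $\mathcal{W}$ are standard subspaces containing the identity, their prescribed sparsity patterns must include every diagonal position, so each subspace contains all diagonal matrices and is closed under right-multiplication by arbitrary diagonal matrices. Consequently, for any invertible diagonal $D$, the pair $(VD,WD)$ lies in $\mathcal{V}\times\mathcal{W}$ and satisfies $(WD)(VD)^{-1}=WV^{-1}$. The orbits of this free diagonal action are $n$-dimensional, so every fiber has dimension at least $n$ and the image has dimension at most $\dim\mathcal{V}+\dim\mathcal{W}-n$.

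For the lower bound I would linearize the map at $(I,I)$. Differentiating $(V,W)\mapsto WV^{-1}$ and using the formula for the derivative of inversion, one gets $(V',W')\mapsto W'-V'$, whose image is $\mathcal{W}+\mathcal{V}$. The dimension formula gives
\begin{equation*}
\dim(\mathcal{V}+\mathcal{W})=\dim\mathcal{V}+\dim\mathcal{W}-\dim(\mathcal{V}\cap\mathcal{W}).
\end{equation*}
The hypothesis that $\mathcal{V}\cap\mathcal{W}$ has no off-diagonal part, combined with the fact that both subspaces contain every diagonal matrix, forces $\mathcal{V}\cap\mathcal{W}$ to coincide with the $n$-dimensional space of diagonal matrices. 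Hence the differential at $(I,I)$ has rank exactly $\dim\mathcal{V}+\dim\mathcal{W}-n$, matching the upper bound.

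The only delicate point is passing from the differential at one point to the maximum rank of the whole map; this is handled by upper semicontinuity of the rank of the differential, or equivalently by observing that for a rational map the image dimension equals the maximum Jacobian rank at a smooth point. The algebraic closure property $VD\in\mathcal{V}$, $WD\in\mathcal{W}$ underlying the symmetry argument is the only place where the standardness of the subspaces is genuinely needed, and it is immediate from the description of a standard basis.
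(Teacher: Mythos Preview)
Your proof is correct and follows essentially the same approach as the paper: both linearize the map at $(I,I)$ to obtain the differential $(V',W')\mapsto W'-V'$ and both invoke the diagonal scaling symmetry $(V,W)\mapsto(VD,WD)$ for the upper bound. Your version is somewhat more explicit than the paper's---you spell out why a standard subspace containing $I$ must contain every diagonal matrix (hence $\mathcal{V}\cap\mathcal{W}$ is exactly the diagonals) and you articulate the semicontinuity step---whereas the paper simply asserts the rank at $(I,I)$ and states that the map ``can be regarded as a function of $\dim\mathcal{V}+\dim\mathcal{W}-n$ variables.''
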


\begin{proof}
Linearize the map \eqref{tulo} at $(\hat{V},\hat{W})$ for both 
$\hat{V}$ and $\hat{W}$ invertible. Using the Neumann series
yields the linear term $$\hat{W}(\hat{W}^{-1}W-\hat{V}^{-1}V)\hat{V}^{-1}.$$
At $(\hat{V},\hat{W})=(I,I)$ the rank is $\dim \mathcal{V}+\dim \mathcal{W}-n.$ 
It is the maximum by the fact that
for any nonsingular diagonal matrix $D$ we have $(VD,WD)\longmapsto WV^{-1}$,
i.e., the map \eqref{tulo} can be
regarded as a function of $\dim \mathcal{V}+\dim \mathcal{W}-n$
variables.
\end{proof}

Aside from this basic principle, more 
refined techniques are devised for simultaneously choosing the matrix 
subspaces $\mathcal{W}$ and $\mathcal{V}$ in the sections that follow.
Most notably, optimal ways of choosing $\mathcal{V}$ are devised.

\section{Optimal construction of the matrix subspace $\mathcal{V}$
and imposing constraints}\label{sec3} 
For the basic algorithms introduced, 
a method for optimally choosing  the
matrix subspace $\mathcal{V}$ is devised
under the assumption that the matrix 
subspace  $\mathcal{W}$ has been given. 
Moreover, mechanisms are introduced 
into the basic algorithms that allow
stabilizing the scheme for better conditioned factors. 
(In approximate inverse preconditioning the latter task 
is accomplished in the simplest
possible way: the subspace $\mathcal{V}$ is simply $\setC I$, i.e.,
scalar multiples of the identity.)

\subsection{Optimally constructing the matrix subspace $\mathcal{V}$}
Suppose the matrix subspace  $\mathcal{W}$ has been given.
Then the condition \eqref{nocond} yields a columnwise 
criterion for optimally choosing the sparsity structure 
of the matrix subspace $\mathcal{V}$. 
(Recall that it must be assumed that the nonsingular elements of $\mathcal{V}$ allow
a rapid application of the inverse.)
Once done, proceed by using one of the basic algorithms to compute the factors. 

Consider \eqref{nocond}. 
It is beneficial to choose the sparsity structure of $v_{j}$ in such a way that
the norm of $M_j$ is as large as possible, with the constraint that in the
resulting $\mathcal{V}$ the nonsingular elements are readily invertible. 
In other words,
among admissible columns  of $Q_j^*$, take $l_j$ columns  which 
yields $M_j$ with the maximal norm.
This means that for the optimization problem \eqref{aivan},
with a fixed matrix subspace  $\mathcal{W}$, the matrix subspace
$\mathcal{V}$ is constructed in an optimal way. 


Certainly, the problem of choosing $l_j$ columns to maximize the  norm is combinatorial and thereby rapidly finding a solution does not appear to be straightforward.
A suboptimal choice for the matrix $M_j$ can be readily generated by taking $l_j$
admissible columns of $Q_j^*$ with largest norms. When done with respect to the Euclidean norm,  
the Frobenius norm of the submatrix is maximized instead.
This can be argued, of course, by the fact that 
$$\frac{1}{\sqrt{\max \{k_j,l_l\}}}\left|\left| M_j \right| \right|_F\leq 
\left|\left| M_j \right| \right|\leq \left|\left| M_j \right| \right|_F$$
holds.

This approach starts with $\mathcal{W}$ and then yields $\mathcal{V}$
(sub)optimally.
This process can be used to assess how $\mathcal{W}$ was initially 
chosen. Let us illustrate this with the following example.

\smallskip

\begin{example}
The choice of upper (lower) triangular matrices 
for $\mathcal{V}$
has
the advantage that then we have a warning signal in case
$\mathcal{W}$ is poorly chosen. Namely, 
suppose $\mathcal{V}$ has been (sub)optimally constructed as just described. 
If the factor $V$ computed to satisfy \eqref{aivan} 
is poorly conditioned, 
one should consider
updating the sparsity structure of $\mathcal{W}$ to have a matrix subspace
which better suited for approximate factoring of the inverse
of $A$.\footnote{This is actually the case in the (numerically) exact factoring: To recover whether a 
matrix $A\in \setC^{n \times n}$ is nonsingular, it is
advisable to compute its partially pivoted LU factorization, i.e.,
use a numerically reliable algorithm.}
\end{example}

\smallskip

In this optimization scheme, let us illustrate how the 
matrix subspace $\mathcal{W}$ actually could be poorly chosen.
Namely, the way the above optimization scheme is 
set up means that the sparsity structure
of $\mathcal{W}$ should be such that no two columns share the same 
sparsity structure. (Otherwise $V$ will have equaling columns.)
Of course, this may be too restrictive. In the section that
follows, a way to circumvent this problem is devised by stabilization.

\subsection{Optimizing under additional constraints}
There are instances which require imposing additional constraints
in computing the factors.
Aside from the problems described above,
in tough problems the approximate factors may be poorly 
conditioned of even 
singular.\footnote{This
is a well-known phenomenon in preconditioning. For ILU factorization
there are many ways to stabilize the computations \cite{BE}.
Stabilization has turned out to be indispensable in practice.} 
Because there holds
\begin{equation}
\label{eqn:minbound}
  \frac{\left|\left|AWV^{-1}-I \right| \right| }{
\left|\left| V^{-1}\right| \right|} 
\leq \left|\left| AW-V\right| \right| 
\leq \left|\left| AWV^{-1}-I\right| \right| \left|\left| V\right| \right|,
\end{equation}
this certainly cannot be overlooked.
To overcome this, it is advisable to stabilize the computations by appropriately 
modifying the optimality conditions in computing the factors. 

For the first basic algorithm this means a refined computation of $V$.
Thereafter the factor $W$ is computed columnwise 
as before to satisfy the conditions \eqref{condwj}.
For a case in which the conditioning is readily
controlled,  consider a matrix subspace $\mathcal{V}$ belonging
to the set of  upper (or lower) triangular matrices.
Then, suppose the $j$th column $v_j$  computed
to satisfy \eqref{nocond} results in a tiny $j$th component.
To stabilize the computations for the first basic algorithm, 
we replace $v_j$ by first imposing the $j$th component of $v_j$ to equal 
a constant $r_j>0$. For the remaining components,
let $\hat{M}_j$ be a submatrix consisting of 
the $l_j-1$ largest columns of $Q_j^*$ 
among its first $j-1$ columns.   
Denote the $j$th column of $Q_j^*$ by $p_j$.
Then consider the optimization problem
\begin{equation}\label{misa}
\max_{||\hat{v}_j||_2=1}\left| \left| r_jp_j+\hat{M}_j\hat{v}_j\right| \right|_2.
\end{equation}
By invoking the singular value decomposition $\hat{M}_j=
\hat{U}_j\hat{\Sigma}_j\hat{V}_j^*$ of $\hat{M}_j$,  this is equivalent to solving
\begin{equation}\label{misas}
\max_{||\hat{v}_j||_2=1}\left| \left| r_j\tilde{p}_j+\hat{\Sigma}_j
\tilde{v}_j\right| \right|_2,
\end{equation}
where $\tilde{p}_j=\hat{U}_j^*p_j$ and $\tilde{v}_j=\hat{V}_j^*\hat{v}_j$.
Consequently, choose $\tilde{v}_j=(e^{i \theta},0,0,\ldots,0)$, where
$\theta$ is the argument of the first component of 
$\tilde{p}_j$. (If the first component is zero, then any $\theta$ will
do.)
Set the column $v_j$ to be the sum of $r_je_j$ and the vector
obtained after putting the entries of 
$\hat{V}_j\tilde{v}_j$ 
at the positions where the corresponding
$l_j-1$ largest columns of $Q_j^*$ appeared.
(Here $e_j$ denotes
the $j$th standard basis vector of $\setC^n$.)

Observe that the solution does not depend on the value of 
$r_j>0$. In particular,
it is not clear how large $r_j$ should be.

Again it is instructive to contrast this with the approximate
inverse computations.

\smallskip

\begin{example}\label{exai}
The sparse approximate inverse computations yield the simplest
case of imposing additional constraints as just described.
That is, the sparse approximate inverse computations can be interpreted as
having $l_j=1$ for every column,
combined with imposing $r_j=1$.
\end{example}

\smallskip

The LU factorization and thereby triangular matrices
are extensively used in preconditioning. Because the LU factorization
without pivoting is unstable, some kind of stabilization is needed.
It is clear that the QR factorization also gives reasons to look at 
triangular matrices. The approach differs from that of using the LU factorization in
that its computation does not require a stabilization, i.e.,
nothing like partial pivoting is needed. Of course,
our intention is not to propose computing the full QR factorization.
Understanding the Q factor is critical as follows.
\smallskip

\begin{example}\label{spqr}
The QR factorization $A^*=QR$ of the Hermitian transpose of 
$A$ can be used as a starting point to construct matrix subspaces 
for approximate
factoring of the inverse. 
Namely, we have $AQ=R^*$. Therefore 
$\mathcal{V}$ belonging to the set of lower triangular matrices
is a natural choice. For $\mathcal{W}$ one needs to generate an 
approximation to the sparsity structure of $Q$. For this there are many
alternatives.  
\end{example}

\smallskip

Aside from upper (lower) triangular 
matrices, the are, of course, completely different alternatives.
Consider, for example,  choosing $V$ among diagonally dominant matrices.
Since the set of diagonally dominant matrices
is not a matrix subspace, dealing with this structure requires
using constraints. 
It is easy to see that the problem
can be tackled  completely analogously, 
by imposing 
 imposing $r>1$ to hold for every diagonal entry. Thereafter \eqref{misa} 
solved for having the other components in the column. 
The inversion of $V$ can be performed by simple
algorithms such as the Gauss-Seidel method.

\section{Constructing the matrix 
subspace $\mathcal{W}$}\label{Sec4}  
Optimally constructing the matrix subspace $\mathcal{W}$ 
for approximate factoring of the inverse appears seemingly challenging. 
Some ideas are suggested in what
follows, although no claims concerning the optimality are made. 
We suggest starting
the process by taking an initial standard matrix subspace $\mathcal{V}_0$
which precedes the actual $\mathcal{V}$. 
Once $\mathcal{W}$ has been as constructed, then
$\mathcal{V}_0$ should be replaced
with $\mathcal{V}$ computed
with the techniques introduced in Section \ref{sec3}. 

\subsection{The Neumann series constructions}
\label{sec:wcalheur}
For  approximate inverse computations
the selection of an
a-priori sparsity pattern 
is a  well-known problem
\cite{Chow2000, Benzi1997}.
Good sparsity patterns are, at least in some cases, related to the
transitive closures 
of subsets of the connectivity graph of 
$G(A)$ of $A$. 
This can also be interpreted as computing
level set expansions on the vertices of a sparsified $G(A)$.

In \cite{Chow2000} numerical dropping is used
to sparsify $G(A)$ or its level set
expansions. 
Denote by $v\in \Cset^n$ a
vector with entries $v_j$. To select the relatively large entries of
$v$ numerically, entries are dropped by relative tolerance $\tau$ and
by count $p$, i.e., only those entries of $v$ that are relatively
large with the restriction of $p$ largest entries at most are
stored. 
(Note that the diagonal elements are
not subjected to numerical dropping.)
In what follows, these rules are referred to as numerical
dropping by tolerance and count. 

The dropping can be performed on an initial matrix or during the
intermediate phases of the level set expansion. Thus we have two sets
of parameters $(\tau_i,p_i)$ controlling the initial sparsification and
$(\tau_l,p_l)$ controlling the sparsification during level set
expansion. In addition, we adopt the convention that setting any
parameter as zero implies that the dropping parameter is not used.

With these preparations for approximate factoring of the inverse, 
take an initial standard matrix subspace $\mathcal{V}_0$ and 
consider generating a sparsity pattern for 
$\Wcal$. 
Assuming $V_0=P_{\Vcal_0}A\in \Vcal_0$ is invertible, we
have
\begin{equation*}
  A=V_0(I-V_0^{-1}(I-P_{\Vcal_0})A)=V_0(I-S).
\end{equation*}
Whenever $\knorm{S}{}<1$, there holds
$A^{-1}=(I+\sum_{j=1}^\infty S^j)V_0^{-1}=WV_0^{-1}$
by invoking the Neumann series. Therefore then
\begin{equation}
  \label{eqn:factspar}
W=I+\sum_{j=1}^\infty S^j.
\end{equation} 
Although the assumption $\knorm{S}{}<1$ is generally too strict in practice, 
we may formally truncate 
the series \eqref{eqn:factspar} 
to generate a sparsity pattern.
To make this economical and to retain $\mathcal{W}$ sparse enough, 
compute powers of $S$ only approximately
by using sparse-sparse operations combined with numerical dropping
and level of fill techniques.

Observe that, to operate with the series \eqref{eqn:factspar} we need 
$S=V_0^{-1}(I-P_{\Vcal_0})A$.  It is this which requires setting an initial 
standard matrix subspace $\Vcal_0$. 

\smallskip

\begin{example}
For $S=V_0^{-1}(I-P_{\Vcal_0})A$ we need to set an initial standard matrix subspace. 
The most inexpensive alternative is to take 
$\mathcal{V}_0$ to be the set of diagonal matrices. 
Then $V_0=P_{\Vcal_0}A$ is an immediately found.
\end{example}

\smallskip

There are certainly other inexpensive alternatives for $\Vcal_0$, such
as block diagonal matrices. Once fixed, thereafter the scheme
can be given as Algorithm \ref{alg:factspar} below.

\begin{algorithm}[t]
   \caption{Sparsified powers for constructing $\Wcal$}
   \label{alg:factspar} 
   \begin{algorithmic}[1]
     \State Set a truncation parameter $k$
     \State Compute $V_0^{-1}$
     \State Compute $S=V_0^{-1}(I-P_{\Vcal_0})A$ 
     \State Apply numerical dropping by tolerance and count to columns of $S$
     \For{{\bf columns} $j$ {\bf in parallel}}
     \State Set $s_j=t_j=e_j$
     \For{$l=1,\ldots,k$}
     \State Compute $t_j=St_j$
     \State Apply numerical dropping by tolerance and count to $t_j$
     \State Compute $s_j=s_j+t_j$
     \EndFor
     \State Set sparsity structure of $w_j$ to be the sparsity structure of $s_j$
     \EndFor
     \State Set $\Wcal=\Wcal\setminus\{\Vcal_0\setminus \mathcal{I}\}$ 
   \end{algorithmic}
\end{algorithm}


Note that final step of Algorithm \ref{alg:factspar} is to keep the
intersection of $\Wcal$ and $\Vcal_0$ empty apart from the
diagonal; see Section \ref{remarks}.
After the sparsity structure for a matrix subspace $\Wcal$
has been generated, the sparsity structure of $\Vcal_0$ can be updated to be $\mathcal{V}$
by using $\Wcal$.



\subsection{Algebraic constructions}
Next we consider some  purely algebraic arguments  
which might be of use in constructing $\mathcal{W}$. Again start with
an initial standard matrix subspace $\mathcal{V}_0$.
Take the sparsity structure of the $j$th column of 
$\mathcal{V}_0$ and consider the corresponding 
rows of $A\in \setC^{n \times n}$. 
Choose the sparsity structure of
the $j$th column of $\mathcal{W}$ to be the union
of the sparsity structures of these rows. 
This is a necessary (but not sufficient) condition
for $A\mathcal{W}$ to have an intersection with $\mathcal{V}_0$. 
This simply means 
choosing $\mathcal{W}$ to have the 
sparsity structure of $A^*\mathcal{V}_0$.

Most notably, the process is very inexpensive and can be executed in parallel.
One only needs to control that the columns of $\mathcal{W}$ remain
sufficiently sparse. With probability one, the following 
algorithm yields the desired sparsity structure.




\begin{algorithm}[H]
\caption{Computing a sparsity structure for $\mathcal{W}$}
\label{algor}
\begin{algorithmic}[2]
\Require A sparse matrix $A \in \setC^{n \times n}$ and
a random column $v_j \in \mathcal{V}_0$.
\Ensure Sparsity structure of the column $w_j$.
\State Compute $w=A^*v_j$
\If{$w$ is not sparse enough}
\State Sparsify $w$ to have the sparsity structure of $w_j$.
\EndIf
\State Take the sparsity structure of  $w_j$ to be the sparsity structure of $w$.
\end{algorithmic}
\end{algorithm}

Observe that we do not have $A^*\mathcal{V}_0=\mathcal{W}$ since
the computation is concerned with sparsity structures.

Approximate inverse
preconditioning corresponds to choosing $\mathcal{V}_0$ 
to be the set of diagonal matrices. 
Then the sparsity structure of 
$\mathcal{W}$ equals that of $A^*$.
The following two
examples illustrate two extremes cases of
this choice.

\smallskip

\begin{example}\label{teje} 
Take $\mathcal{V}_0$ to be the set of diagonal
matrices. Then  the first basic algorithm reduces to
the approximate inverse preconditioning. 
Algorithm \ref{algor} yields now a standard matrix subspace $\mathcal{W}$  
whose sparsity structure equals that of $A^*$. 
This can yield very good results. 
If $A$ has orthogonal rows (equivalently, columns)
then and only then this gives exactly a correct matrix 
subspace $\mathcal{W}$ for factoring the inverse of $A$
as $AWV^{-1}=I$ 
 when $\mathcal{V}$ is taken
to be $\mathcal{V}_0$.\footnote{In view of this, it
seems like a natural problem
to ask, how well $A$ can be approximated with
matrices of the form $DU$ with $D$ diagonal and $U$ unitary.}
\end{example}

\smallskip

Having identified an ideal structure for
the approximate inverse preconditioning when $\mathcal{W}$ 
is constructed with Algorithm \ref{algor}, 
how about when $A$ is far from being a scaled
unitary matrix? An upper (lower) triangular matrix
is a scaled unitary matrix only when it reduces to a 
diagonal matrix.

\smallskip

\begin{example}\label{teje2} 
Take again $\mathcal{V}_0$ to be the set of diagonal
matrices. Then the basic algorithm reduces to
the approximate inverse preconditioning.
Algorithm \ref{algor} yields a standard matrix subspace $\mathcal{W}$  
whose sparsity structure equals that of $A^*$. 
This yields very poor results if  
$A$ is an upper (lower) triangular matrix. Namely, then its inverse is
also upper (lower) triangular. 
\end{example}

\smallskip

Algorithm \ref{algor} is
set up in such a way that
if $\mathcal{V}_0\subset \tilde{\mathcal{V}_0}$, then
$\mathcal{W}\subset \tilde{\mathcal{W}}$.
Thereby  matrix subspaces can be constructed 
to handle the two extremes of 
Examples \ref{teje} and \ref{teje2}
simultaneously.

In practice $\mathcal{V}_0$ should be 
more complex, i.e., the set of diagonal matrices
is a too simple structure. 
One option is to start with $\mathcal{V}_0$ having 
the sparsity structure of the Gauss-Seidel 
preconditioner. 

\begin{definition} A standard matrix subspace $\mathcal{V}$ of $\setC^{n \times n}$ is said to have 
the sparsity structure of the Gauss-Seidel 
preconditioner of $A\in \setC^{n \times n}$ if the nonzero
entries in $\mathcal{V}$ 
appear on the diagonal and 
there where the strictly lower (upper) triangular part of
$A$ has nonzero entries.
\end{definition}

\section{Numerical experiments}

The purpose of this final section is to illustrate, with the help of four
numerical experiments, how the preconditioners devised in Sections
\ref{sec:DIAF} and \ref{sec3} perform in practice. Since there is an
abundance of degrees of freedom to construct matrix subspaces for
approximate factoring of the inverse, only a very incomplete set of
experiments can be presented. In particular, we feel that there is a
lot of room for new ideas and improvements.

In choosing the benchmark sparse linear systems, we used the
University of Florida collection \cite{Davis2011}.  
The problems were selected to be the most challenging ones
to precondition among those tested in \cite{Benzi2000}.  
For the matrices used and some of their properties, see Table
\ref{tab:testprob_small}.
Assuming the
reader has an access to \cite{Benzi2000}, the comparison between the
methods proposed here and the diagonal Jacobi preconditioning,
\ilut$(0)$, \ilut$(1)$, \ilut\ and \ainv\ can be readily made.
For a comparision between ILUs and AINV,
see, e.g., \cite{BOS}.

\begin{table}[t]
\begin{center}
  \begin{tabular}{|*{5}{c|}}
    \hline
    Problem    & Area & $n$ & $\nz{A}$ & $k_1=\nz{A}/n$ \\ 
    \hline
    west1505 & Chemical engineering & 1505    & 5414     & 3.6 \\ 
    west2021 & Chemical engineering & 2021    & 7310     & 3.62 \\ 
    lhr02    & Chemical engineering & 2954    & 36875    & 12.5 \\ 
    bayer10  & Chemical engineering & 13436   & 71594    & 5.33 \\ 
    sherman2 & PDE                  & 1080    & 23094    & 21.4 \\ 
    gemat11  & Linear programming   & 4929    & 33108    & 6.72 \\ 
    gemat12  & Linear programming   & 4929    & 33044    & 6.7 \\ 
    utm5940  & PDE                  & 5940    & 83842    & 14.1 \\ 
    e20r1000 & PDE                  & 4241    & 131430 & 31 \\ 
    \hline
  \end{tabular}
  \caption{Matrices of the experiments, their application area, size,
    number of nonzeros and density.}
  \label{tab:testprob_small}
\end{center}
\end{table}

Regarding preprocessing, in each experiment the original matrix has
been initially permuted to have nonzero diagonal entries and scaled
with \mcSF. (See \cite{Duff2001} for \mcSF.) It is desirable that the
matrix subspace $\Vcal$ contains hierarchically connected parts of the
graph of the matrix. To this end we use an approach to find the
strongly connected subgraphs of the matrix; see Duff and Kaya
\cite{Duff2011}. We then obtain a permutation $P$ such that after the
permutations, the resulting linear system can be split as
\begin{equation}\label{splitti}
Ax=(L+D+U)x=b, 
\end{equation}
where $L^T$ and $U$ are strictly block upper triangular and $D$ is a
block diagonal matrix. The construction of this permutations consumes
at most $\Oh{n\log{(n)}}$ operations.\footnote{Preprocessing is
  actually a part of the process of constructing the matrix subspaces
  $\mathcal{W}$ and $\mathcal{V}$. That is, it is insignificant
  whether one orders correspondingly the entries of the matrix or the
  matrix subspaces.}

In the experiments, the right-hand side $b \in \setC^n$ in
\eqref{splitti} was chosen in such a way that the solution of the
original linear system was always $x=(1,1,\ldots,1)$.  As in
\cite{Benzi2000}, as a linear solver we used
\bicgstab\ \cite{vanVorst1992}. The iteration was considered converged
when the initial residual had been reduced by eight orders of
magnitude.

The numerical experiments were carried out with
\matlab \footnote{VersionR2010a.}.

\medskip

\begin{example}
\label{ex:paifdiaf1}

We compare the minimization algorithm presented \cite{BHU} (\paif)
with the QR factorization based minimization algorithm of Section
\ref{sec:qralg} (\diaf-Q). We construct $\Wcal$ with the
heuristic Algorithm \ref{alg:factspar} of Section
\ref{sec:wcalheur}. For all test matrices, we use $k=3$ and
$\tau_i=1E-1$, $p_i=0$, $\tau_l=0$ and $p_l=0$, as parameters. For
\paif, $80$ refinement iterations were always used which is a somewhat
more than what we have found to be necessary in practice. However, we
want to be sure that the comparison is descriptive in terms of the
quality of the preconditioner.

We choose $\Vcal$ to be the subspace of block diagonal matrices with
block bounds and sparsity structure chosen according to the block
diagonal part of $A$, i.e., the matrix $D$ in \eqref{splitti}.  Then
in the heuristic construction of $\Wcal$ with Algorithm
\ref{alg:factspar},  $V_0$ is taken to be a diagonal matrix.

We denote by $|D_j|_M$ the maximum blocksize of $\Vcal$ and by \#$D_j$
the number of blocks in $\Vcal$ in total. Density of the
preconditioner, denoted by $\rho$, is computed as
$\rho=(\nz{W}+\nz{L_{V}}+ \nz{U_{V}})/\nz{A}$, where $\nz{A}$,
$\nz{W}$, $\nz{L_{V}}$ and $\nz{U_{V}}$ denote the number of nonzeroes
in $A$, $W$ and the LU decomposition of $V$. For both \paif\ and
\diaf-Q, we also compute the condition number estimate $\kappa(V)$ and
norm of the minimizer $\knorm{AW-V}{F}$, denoted by nrm. Finally, its
denotes the number of \bicgstab\ iterations. By $\dagger$ we denote if
no convergence of \bicgstab\ within $1000$ iterations. Breakdown of
\bicgstab\ is denoted by $\ddag$. Table \ref{tab:paifdiaf1_res} shows
the results.
\begin{table}[H]
  \begin{center}
    \begin{footnotesize}
  \begin{tabular}{|*{10}{c|}}
    \hline
 \multicolumn{4}{|c|}{}& \multicolumn{3}{l|}{\paif}& \multicolumn{3}{l|}{\diaf-Q} \\
 Problem  &  $|D_j|_M$  &  \#$D_j$  &  $\rho$  &  $\kappa(V)$  & nrm  &  its  &  $\kappa(V)$  &  nrm  &  its  \\
 \hline
 west1505 & 50 & 34 & 2.75 & 3.17E+04 & 3.59 & 18 & 1.85E+03 & 3.49 & 18 \\
 west2021 & 50 & 47 & 2.69 & 5.53E+03 & 3.84 & 23 & 3.33E+03 & 3.53 & 26 \\
 lhr02 & 50 & 66 & 1.11 & 1.65E+03 & 6.69 & 24 & 9.05E+02 & 7.01 & 32 \\
 bayer10 & 250 & 67 & 2.56 & 8.00E+05 & 22.27 & 56 & 2.50E+05 & 14.27 & 36 \\
 sherman2 & 50 & 24 & 1.05 & 4.32E+02 & 2.45 & 5 & 3.77E+02 & 1.84 & 5 \\
 gemat11 & 50 & 115 & 1.91 & 2.28E+05 & 3.58 & 109 & 1.50E+05 & 2.79 & 68 \\
 gemat12 & 50 & 114 & 1.91 & 5.96E+06 & 6.87 & 77 & 4.58E+06 & 5.20 & 77 \\
 utm5940 & 250 & 29 & 1.73 & 3.91E+06 & 14.66 & 295 & 1.84E+06 & 12.86 & 221 \\
 e20r1000 & 200 & 27 & 4.23 & 3.22E+06 & 13.67 & 465 & 4.44E+03 & 8.82 & 364 \\
    \hline
  \end{tabular}
  \end{footnotesize}
  \caption{Comparison of \paif\ and \diaf-Q\ algorithms}
  \label{tab:paifdiaf1_res}
  \end{center}
\end{table}

Results very similar to those seen in Table \ref{tab:paifdiaf1_res}
were also observed in other numerical tests that were conducted. As a
general remark, the iteration counts with \bicgstab\ when
preconditioned with \diaf-Q\ are not dramatically different from those
achieved with \paif.  The main benefits of \diaf-Q\ are that neither
the Hermitian transpose of $A$ is required in the computations nor an
estimate for the norm of $A$. Moreover, \diaf-Q\ is a direct method,
so that its computational cost is easily estimated, while it is not so
clear when to stop the iterations with \paif.

The computational cost and parallel implementation of \diaf-Q\ is very
similar to the established preconditioning techniques based on norm
minimization for sparse approximate inverse. 
(For these issues, see \cite{Chow2000}.)  
That is, \diaf-Q\ scales essentially accordingly in terms
of the computational cost and parallelizability properties.
\end{example}

\begin{example}
\label{ex:paifdiaf2}

Next we compare \paif\ with the SVD based algorithm of Section
\ref{sec:svdalg} (\diaf-S). Again $\Wcal$ is constructed
with the heuristic  Algorithm \ref{alg:factspar} 
of Section
\ref{sec:wcalheur}.  All the parameters were kept the same as in the
previous example, i.e., $k=3$ and $\tau_i=1E-1$, $p_i=0$, $\tau_l=0$
and $p_l=0$. Also, $80$ refinement steps were again used in the power
method, so that the results for \paif\ are identical to those
presented in Example \ref{ex:paifdiaf1}.

Table \ref{tab:paifdiaf2_res} shows the results.
\begin{table}[H]
  \begin{center}
    \begin{footnotesize}
  \begin{tabular}{|*{10}{c|}}
    \hline
 \multicolumn{4}{|c|}{}& \multicolumn{3}{l|}{\paif}& \multicolumn{3}{l|}{\diaf-S} \\
 Problem  &  $|D_j|_M$  &  \#$D_j$  &  $\rho$  &  $\kappa(V)$  & nrm  &  its  &  $\kappa(V)$  &  nrm  &  its  \\
 \hline 
 west1505 & 50 & 34 & 2.75 & 3.17E+04 & 3.59 & 18 & 4.06E+03 & 3.04 & 14 \\
 west2021 & 50 & 47 & 2.69 & 5.53E+03 & 3.84 & 23 & 8.31E+03 & 3.26 & 27 \\
 lhr02 & 50 & 66 & 1.11 & 1.65E+03 & 6.69 & 24 & 1.90E+03 & 5.68 & 55 \\
 bayer10 & 250 & 67 & 2.56 & 8.00E+05 & 22.27 & 56 & 9.50E+05 & 11.68 & 46 \\
 sherman2 & 50 & 24 & 1.05 & 4.32E+02 & 2.45 & 5 & 4.33E+02 & 1.67 & 5 \\
 gemat11 & 50 & 115 & 1.91 & 2.28E+05 & 3.58 & 109 & 2.28E+05 & 2.90 & 113 \\
 gemat12 & 50 & 114 & 1.91 & 5.96E+06 & 6.87 & 77 & 1.51E+08 & 3.72 & 201 \\
 utm5940 & 250 & 29 & 1.73 & 3.91E+06 & 14.66 & 295 & 3.91E+06 & 7.43 & $\ddag$ \\
 e20r1000 & 200 & 27 & 4.23 & 3.22E+06 & 13.67 & 465 & 1.96E+04 & 10.37 & 444 \\
    \hline
  \end{tabular}
  \end{footnotesize}
  \caption{Comparison of \paif\ and \diaf-S\ algorithms}
  \label{tab:paifdiaf2_res}
  \end{center}
\end{table}

The results of Table \ref{tab:paifdiaf2_res} with \diaf-S are very
similar to those in Table \ref{tab:paifdiaf1_res}. The only notable
exception is the matrix utm5940, for which no convergence was achieved
with \diaf-S. With the metrics used, we do not quite understand why
\diaf-S fails to produce a good preconditioner for this particular
problem. The computed norm $\knorm{AW-V}{F}$ is smaller than the one
attained with \diaf-Q and the condition number estimate is only
slightly worse. The reason is most likely related with the fact that
the bound \eqref{eqn:minbound} cannot be expected to be
tight enough when $\kappa(V)$ is large. 

\end{example}

The following example illustrates how the matrix subspace $\Vcal$ can
be optimally constructed with the techniques of Section \ref{sec3}.

\begin{example}
  \label{ex:optconst_diag}

  In this example we consider an optimal construction of $\Vcal$. To this
  end, we first construct $\Wcal$ with the heuristic Algorithm
  \ref{alg:factspar} presented in Section \ref{sec:wcalheur}. Then, to
  construct $\Vcal$, we apply the techniques presented in Section
  \ref{sec3}. After the sparsity structures  of the subspaces have been
  fixed, the resulting minimization problem is solved with
  \diaf-Q.

  Consider the minimization problem \eqref{condwj}. If no restrictions
  on the number of nonzero entries in a matrix subspace $\Vcal$
  are imposed, the norm $\knorm{AW-V}{F}$ can be
  decreased by choosing as many entries as possible from the sparsity
  structure of  $AW$ to be in the sparsity 
  structure of  $\Vcal$.\footnote{For example, $\mathcal{V}$ can never 
  be the full set set of upper triangular matrices since it 
  would require storing $O(n^2)$ complex numbers. The problem is then,
  how to choose a subspace $\mathcal{V}$ of upper triangular matrices.}
  To illustrate this, we take 
  $\mathcal{V}$ to be a subspace of block digonal matrices
  by allowing only certain degree of sparsity $k_{\Vcal}$ per column. 
  The nonzero entries are chosen with the techniques of  Section \ref{sec3}. 
 
  We again set $k=3$ and $\tau_i=1E-1$, $p_i=0$, $\tau_l=0$ and
  $p_l=0$, as parameters for all test matrices.
  To have the locations for the entries in the diagonal blocks of
  $\Vcal$, we then apply the method presented in Section
  \ref{sec3}. Subspace $\mathcal{W}$ is constructed with the heuristic 
  Algorithm \ref{alg:factspar} by
  setting $\Vcal_0$ to be a subspace of block diagonal matrices with full
  blocks. 
  This is to ensure that intersection of the final $\Vcal$ and
  $\Wcal$ is empty.

  Table \ref{tab:paifdiaf2_res} shows the results, where at most
  $k_{\Vcal}$ entries in each column of the sparsity pattern of
  $\Vcal$ have been allowed. For each test problem we have used the
  same block structure as in Examples \ref{ex:paifdiaf1} and
  \ref{ex:paifdiaf2}, only the locations of the nonzero entries in
  $\Wcal$ and $\Vcal$ is varied.
  \begin{table}[H]
    \begin{center}
      \begin{scriptsize}
        \begin{tabular}{|*{13}{c|}}
          \hline
          & \multicolumn{4}{|l|}{$k_{\Vcal}=10$}& \multicolumn{4}{l|}{$k_{\Vcal}=30$}& \multicolumn{4}{l|}{$k_{\Vcal}=50$} \\
        Problem  &  $\rho$  &  $\kappa(V)$  & nrm  &  its  & $\rho$  &  $\kappa(V)$  & nrm  &  its & $\rho$  &  $\kappa(V)$  & nrm  &  its   \\
        \hline 
        west1505 & 2.81 & 2.24E+03 & 3.54 & 20 & 2.83 & 4.57E+03 & 3.36 & 20 & 2.83 & 4.57E+03 & 3.36 & 20 \\
        west2021 & 2.75 & 5.11E+04 & 3.70 & 30 & 2.76 & 5.67E+04 & 3.46 & 31 & 2.76 & 5.67E+04 & 3.46 & 31 \\
        lhr02 & 1.15 & 1.07E+04 & 6.95 & 47 & 1.17 & 1.27E+04 & 6.92 & 43 & 1.17 & 1.27E+04 & 6.92 & 43 \\
        bayer10 & 2.68 & 6.72E+07 & 13.83 & 104 & 2.75 & 1.86E+05 & 13.42 & 29 & 2.76 & 1.86E+05 & 13.42 & 31 \\
        sherman2 & 1.01 & 1.49E+02 & 1.88 & 7 & 1.11 & 3.77E+02 & 1.82 & 5 & 1.11 & 3.77E+02 & 1.82 & 5 \\
        gemat11 & 2.14 & 1.50E+05 & 2.84 & 81 & 2.24 & 1.50E+05 & 2.75 & 68 & 2.24 & 1.50E+05 & 2.75 & 69 \\
        gemat12 & 2.09 & 4.24E+06 & 5.21 & 80 & 2.17 & 4.58E+06 & 5.13 & 71 & 2.17 & 4.58E+06 & 5.13 & 70 \\
        utm5940 & 2.11 & 1.82E+06 & 13.12 & $\ddag$ & 2.49 & 2.09E+06 & 12.72 & 209 & 2.51 & 2.11E+06 & 12.71 & 201 \\
        e20r1000 & 3.77 & 2.16E+06 & 20.66 & $\ddag$ & 4.99 & 1.01E+05 & 11.76 & $\ddag$ & 5.49 & 6.67E+04 & 8.77 & 418 \\
        \hline
      \end{tabular}
    \end{scriptsize}
    \caption{Adaptive selection of $\Vcal$ for different values of $k_{\Vcal}$}
    \label{tab:paifdiaf3_res}
  \end{center}
\end{table}
  
  As seen in Table \ref{tab:paifdiaf3_res}, choosing more entries in
  $\Vcal$, i.e., increasing $k_{\Vcal}$ always improves the norm of
  the minimizer, which is well supported by the theory. Allowing more
  entries in $\Vcal$ generally produces a better preconditioner. In a
  few cases where a slightly worse convergence can be observed, we also
  observe a slightly worse condition number estimate for the computed
  $V$.

\end{example}

The final example illustrates the optimal selection of a block upper
triangular subspace $\Vcal$ as well as optimization under additional
constraints. 

\begin{example}
  \label{ex:optconst_ut}

  We consider optimal construction $\Vcal$ in the case where $\Vcal$
  is block upper triangular. As parameters we again use $k=3$ and
  $\tau_i=1E-1$, $p_i=0$, $\tau_l=0$ and $p_l=0$ and use the strongly
  connected subgraph approach to have a block structure for the
  subspace $\Vcal$.

  To have locations for the entries in the block upper triangular
  $\Vcal$, we apply the method presented in Section \ref{sec3}. As in
  Example \ref{ex:optconst_diag}, we construct $\Wcal$ with Algorithm
  \ref{alg:factspar} by setting $\Vcal_0$ as a subspace of block upper
  triangular matrices with full blocks. The resulting $\Wcal$ is a
  lower triangular matrix subspace consisting of a diagonal part and a
  strictly block lower triangular part. The resulting minimization
  problem is solved with \diaf-Q. Note that by the structure of such a
  subspace, the conditioning of $W\in\Wcal$ can be readily verified.

  Table \ref{tab:paifdiaf2_res} shows the results, where at most
  $k_{\Vcal}$ entries in each column in the block upper triangular
  part of $\Vcal_0$ have been allowed. The used block structure is the
  same as in Examples \ref{ex:paifdiaf1}, \ref{ex:paifdiaf2} and
  \ref{ex:optconst_diag}, only the locations and
  the number of the nonzero entries is varied in $\Wcal$ and $\Vcal$.
  \begin{table}[H]
    \begin{center}
      \begin{scriptsize}
        \begin{tabular}{|*{13}{c|}}
          \hline
          & \multicolumn{4}{|l|}{$k_{\Vcal}=10$}& \multicolumn{4}{l|}{$k_{\Vcal}=30$}& \multicolumn{4}{l|}{$k_{\Vcal}=100$} \\
        Problem  &  $\rho$  &  $\kappa(V)$  & nrm  &  its  & $\rho$  &  $\kappa(V)$  & nrm  &  its & $\rho$  &  $\kappa(V)$  & nrm  &  its   \\
        \hline 
        west1505 & 2.37 & 4.45E+06 & 4.79 & 860 & 2.68 & 1.88E+06 & 2.52 & 78 & 2.75 & 6.95E+04 & 2.42 & 24 \\
        west2021 & 2.31 & 1.66E+11 & 5.34 & $\dagger$ & 2.57 & 8.30E+05 & 2.53 & 67 & 2.66 & 3.25E+05 & 2.27 & 23 \\
        lhr02 & 1.03 & 2.45E+03 & 4.22 & 36 & 1.40 & 4.89E+03 & 3.21 & 22 & 1.46 & 4.93E+03 & 3.18 & 21 \\
        bayer10 & 2.09 & 3.74E+09 & 11.76 & 963 & 2.42 & 2.90E+06 & 8.19 & 390 & 2.49 & 3.39E+06 & 8.10 & 16 \\
        sherman2 & 0.89 & 1.61E+02 & 0.89 & 6 & 1.24 & 4.66E+02 & 0.63 & 3 & 1.24 & 4.66E+02 & 0.63 & 3 \\
        gemat11 & 1.84 & 1.50E+05 & 2.37 & 55 & 1.96 & 1.60E+05 & 1.74 & 32 & 1.97 & 1.60E+05 & 1.74 & 32 \\
        gemat12 & 1.82 & 1.45E+09 & 3.37 & 160 & 1.95 & 1.99E+09 & 2.38 & 39 & 1.96 & 2.03E+09 & 2.37 & 38 \\
        utm5940 & 1.60 & 5.29E+07 & 9.16 & 653 & 2.07 & 1.12E+09 & 8.17 & 141 & 2.18 & 8.56E+08 & 8.14 & 165 \\
        e20r1000 & 1.90 & 2.00E+11 & 22.32 & $\dagger$ & 2.77 & 6.59E+10 & 12.48 & $\dagger$ & 3.85 & 2.52E+09 & 6.05 & 792 \\
        \hline
      \end{tabular}
    \end{scriptsize}
    \caption{Adaptive selection of $\Vcal$ for different values of $k_{\Vcal}$}
    \label{tab:paifdiaf4_res}
  \end{center}
\end{table}
  
  The results of Table \ref{tab:paifdiaf4_res} are very similar to
  those of Table \ref{tab:paifdiaf3_res}. Again, allowing more entries
  in $\Vcal$ always improves the norm of the minimizer and usually
  also produces a better preconditioner. 

  Similarly as in Example \ref{ex:paifdiaf2}, it is again hard to
  understand why $k_{\Vcal}=100$ produces a worse preconditioner than
  $k_{\Vcal}=30$ for utm5940. We attribute this behaviour to the
  looseness of the bound \eqref{eqn:minbound}, i.e., when $\kappa(V)$
  is large, the minimization of $\knorm{AW-V}{F}$ may not compensate
  sufficiently  for this in these cases.

  To improve the conditioning of $V\in\Vcal$, we now consider the same
  problems using the technique of imposing constraints as described in
  Section \ref{sec3}. As a constraint we require that for the diagonal
  entries $v_{jj}$ of $V$ it holds $\absnorm{v_{jj}}\geq 1e-2$. In
  case the requirement is not met, we impose a constraint with
  $r=2$. Table \ref{tab:paifdiaf4_resstab} describes the results for
  $k_{\Vcal}=100$, where the number of constrained columns is denoted
  by stab.
  
  \begin{table}[H]
    \begin{center}
      \begin{scriptsize}
        \begin{tabular}{|*{6}{c|}}
          \hline
          & \multicolumn{5}{l|}{$k_{\Vcal}=30$} \\
        Problem  &  $\rho$  &  $\kappa(V)$  & nrm  &  stab & its  \\
        \hline 
        west1505 & 2.75 & 6.16E+04 & 3.06 & 1 & 25 \\
        west2021 & 2.66 & 1.63E+05 & 2.92 & 1 & 21 \\
        lhr02 & 1.46 & 4.93E+03 & 3.18 & 0 & 21 \\
        bayer10 & 2.49 & 3.39E+06 & 8.10 & 0 & 16 \\
        sherman2 & 1.24 & 4.66E+02 & 0.63 & 0 & 3 \\
        gemat11 & 1.97 & 1.60E+05 & 1.74 & 0 & 32 \\
        gemat12 & 1.96 & 2.03E+09 & 2.37 & 0 & 38 \\
        utm5940 & 2.18 & 7.56E+06 & 8.24 & 1 & 113 \\
        e20r1000 & 3.85 & 8.66E+09 & 6.27 & 3 & 738 \\
        \hline
      \end{tabular}
    \end{scriptsize}
    \caption{Constrained selection of $\Vcal$ for $k_{\Vcal}=100$}
    \label{tab:paifdiaf4_resstab}
  \end{center}
\end{table}

  As seen in Table \ref{tab:paifdiaf4_resstab}, if only a small number
  columns has to be constrained, the technique is be effective. In
  other numerical experiments not reported here we observed that if
  too many columns have to be constrained, the norm of the minimizer
  $\knorm{AW-V}{F}$ tends to increase. An approach to find a right balance is
   needed then. 

\end{example}

\medskip

To sum up these experiments, the iteration counts obtained with
\diaf-Q\ and \diaf-S\ (which are fully parallelizable)
seem to be competitive with the iteration counts
obtained with the standard algebraic (sequential) preconditioning
techniques. Moreover, a good problem specific tuning of matrix subspaces 
possesses a lot of potential for significantly speeding up the iterations.

\end{document}